\numberwithin{equation}{section}
\newcommand{\R}{\mathbb{R}}
\newcommand{\tir}[1]{\ensuremath{\overline {#1}}} 
\newtheorem{thm}{Theorem}[section] 
\newtheorem{lemma}[thm]{Lemma} 
\newtheorem{prop}[thm]{Proposition} 
\newtheorem{defn}[thm]{Definition} 
\newtheorem{rem}[thm]{Remark}
\def\whsq{\vbox to 5.8pt 
{\offinterlineskip\hrule 
\hbox to 5.8pt{\vrule height 
5.1pt\hss\vrule height 5.1pt}\hrule}}
\def\<{\langle} 
\def\>{\rangle} 
\def\PP{{\mathop{{\rm I}\kern-.2em{\rm P}}\nolimits}} 
\def\FF{{\mathop{{\rm I}\kern-.2em{\rm F}}\nolimits}}   
\def\ZZ{{\mathop{{\rm I}\kern-.2em{\rm Z}}\nolimits}} 
\newlength{\sidemargin} 
\begin{document}
\title[]{On uniqueness of weak solutions to the second boundary value problem for generated prescribed Jacobian equations.
}


\author{Gerard Awanou}
\address{Department of Mathematics, Statistics, and Computer Science, M/C 249.
University of Illinois at Chicago, 
Chicago, IL 60607-7045, USA}
\email{awanou@uic.edu}  
\urladdr{http://www.math.uic.edu/\~{}awanou}

\author{Gantumur Tsogtgerel}
\address{Department of Mathematics and Statistics.
McGill University, 
Montreal, Quebec H3A 0B9, Canada}
\email{gantumur@math.mcgill.ca}  
\urladdr{http://www.math.mcgill.ca/gantumur/}

\begin{abstract}
We prove that two Aleksandrov solutions of a generated prescribed Jacobian equation have the same gradients at points where they are both differentiable and equal. For the optimal transportation case where two solutions can be translated to agree at a point without changing the $g$-subdifferential at that point, we recover the uniqueness up to a constant of solutions. For the general case, our result is a new proof with less regularity assumptions of a key theorem recently used to prove the uniqueness of solutions.

\end{abstract}

\maketitle

\section{Introduction}
Let $\Omega$ and $\Omega^*$ be two non empty bounded open and connected subsets of $\R^d$. We are interested in continuous functions $u$ on $\Omega$ which generate mappings $T_u: \Omega \to \Omega^*$ with a prescribed Jacobian, i.e.
\begin{equation} \label{generated-eq}
\det D T_u(x) = \psi(x,u(x),T_u(x)), \quad T_u(x)=T(x, u(x), D u(x)),
\end{equation}
where $\psi$ and $T$ are functions on $\Omega \times \R \times \Omega^*$ which take values in $\R$ and $\R^d$ respectively. We will assume that $\psi$ is separable in the sense that
$$
\psi(x,u,p) = \frac{f(x)}{R(T(x,u,p))},
$$
for positive functions $f \in L^1(\Omega)$ and $R \in L^1(\Omega^*)$. The second boundary value condition consists in requiring 
\begin{equation} \label{generated-eq2}
T_u(\Omega) = \Omega^*.
\end{equation} 
Problem \eqref{generated-eq}-\eqref{generated-eq2} was introduced by Trudinger \cite{trudinger2012local} motivated by problems in geometric optics 
as a generalization of the Monge-Amp\`ere equation of optimal transportation. In this paper, we prove under the stated assumptions that the gradients of weak solutions to generated prescribed Jacobian equations are equal at points where they are both differentiable and equal. In the optimal transportation case, by translation, we obtain an analytical description of the geometric argument that Aleksandrov solutions are unique up to a constant \cite[Chap 8-Theorem 2]{Pogorelov73}. 
Our result provides a more unified account of the uniqueness problem for generated prescribed Jacobian equations, as the optimal transportation case can be derived from our results without assuming that solutions are $C^{1,1}$. Uniqueness results were proven in \cite{rankin2020distinct} under the assumption that the solutions are in $C^{1,1}(\Omega)$.

A necessary condition for the existence of weak solutions in the sense of Aleksandrov of \eqref{generated-eq}-\eqref{generated-eq2} is the compatibility condition
\begin{equation} \label{generated-eq3}
\int_{\Omega} f(x) d x= \int_{\Omega^*} R(q) dq.
\end{equation} 
Weak solutions to $\det D^2 u(x)=f(x)$ and \eqref{generated-eq3}, in the case $R=1$, were called extremal solutions in \cite{urbas1984elliptic}. Our proof that gradients are equal at points where solutions are equal and differentiable, is analogous to arguments given in \cite{urbas1984elliptic} for extremal solutions. 

We organize the paper as follows. In the next section we review concepts on generated prescribed Jacobian equations pertinent to our results. In section \ref{uniqueness} we present our main result from which we derive uniqueness results. 

\section{Preliminaries}

We make 
structural assumptions following \cite{Jiang2018,rankin2020distinct}. All the assumptions listed below, (A1)--(A7) and (A1*), will be assumed to hold for the results of this paper. Recall that a domain of $\R^d$ is a non empty open and connected subset of $\R^d$.

Let $\Omega'$ be a bounded domain such that $\tir{\Omega} \subset \Omega'$ and let $\Gamma$ be a domain such that $\Gamma \subset  \Omega' \times \tir{\Omega^*} \times \R$ for which the projections
\begin{equation*} 
I(x,y) = \{ \, z \in \R, (x,y,z) \in \Gamma\, \},
\end{equation*}
are open intervals. 
We consider a bounded $C^4(\Gamma)$ function $g$ which will be referred to as the generating function. We refer to \cite[Section 4]{Jiang2014} for detailed examples of generating functions $g$. Let
$$
\mathcal{U} = \{ \, (x, g(x,y,z), g_x(x,y,z)), (x,y,z) \in  \Gamma \, \}.
$$
Note that $\mathcal{U} \subset \Omega \times \R \times \R^d$. We furthermore assume that
\begin{enumerate}
\item[(A1)] for each $(x,u,p) \in \mathcal{U}$, there exists a unique $(x,y,z) \in \Gamma$ such that
$$
g(x,y,z) = u, \quad g_x(x,y,z) = p,
$$
\item[(A2)] $g_z<0$
\end{enumerate}
Assumption (A1) allows to define the mapping $T: \mathcal{U} \to \R^d$ and a scalar function $Z: \mathcal{U} \to \R$  
such that
\begin{align*}
g(x,T(x,u,p),Z(x,u,p)) &= u \\
g_x(x,T(x,u,p),Z(x,u,p)) &=p.
\end{align*}
With Assumption (A2) one defines the dual generating function $h$ by
\begin{equation} \label{H-def}
g(x,y,h(x,y,u)) = u,
\end{equation}
for $(x,y,u) \in \Gamma^*=\{ \, (x,y,g(x,y,z), (x,y,z) \in \Gamma)\, \}$. By the implicit function theorem, $h$ is $C^4$. We define
$$
J(x,y)= g(x,y,.)(I(x,y)),
$$
and require that

(A1*) the mapping $Q=-g_y/g_z$ is one-to-one in $x$ for all $(x,y,z) \in \Gamma$.

Since $h_y=-g_y/g_z$, condition (A1*) is dual to (A1). We furthermore assume that 

(A3) the $d \times d$ matrix $E=g_{xy} - (g_z)^{-1}g_{xz} \otimes g_y$ is invertible on $\Gamma$.

The last assumption allows to write \eqref{generated-eq} as a Monge-Amp\`ere equation \cite{Jiang2018}
$$
\det [D^2 u - g_{xx}(.,T(.,u,Du),Z(.,u,D u))] = \det E(.,T(.,u,D u),Z(.,u,D u)) \psi(.,u,D u). 
$$
This follows from, c.f. \cite{rankin2020distinct},
$$
D T(.,u,Du) = E^{-1} [D^2 u - g_{xx}(.,T(.,u,Du),Z(.,u,D u))]. 
$$
The functions $x \mapsto g(x,.,.)$ play the role hyperplanes play as support functions in the theory of convex functions. 
\begin{defn} \label{def-g}
A function $u \in C^0(\Omega)$ is said to be $g$-convex on $\Omega$ if for each $x_0 \in \Omega$, there exists $(y_0,z_0) \in  \R^d \times \R$ such that $ (x_0,y_0,z_0) \in \Gamma$ and
\begin{align*}
u(x_0) & = g(x_0,y_0,z_0) \\
u(x) & \geq g(x,y_0,z_0) \, \forall x \in \Omega \text{ such that } (x,y_0,z_0) \in \Gamma.
\end{align*}
\end{defn}
We have added to the definition the conditions $ (x_0,y_0,z_0) \in \Gamma$ and $(x,y_0,z_0) \in \Gamma$ since $g$ is defined on $\Gamma$. 
The function $x \mapsto g(x,y_0,z_0)$ is called a $g$-affine function. We say that it is a $g$-support to the graph of $u$ at $x_0$. Let $|| . ||$ denote the Euclidean norm of $\R^d$. We have \cite[Proposition 3.4]{jeong2020h}

\begin{lemma} \label{semi-convex}
A $g$-convex function $u$ is semi-convex, i.e. $u + C ||x||^2$ is convex for some constant $C$. 
\end{lemma}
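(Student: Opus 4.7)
The plan is to exhibit $u$ locally as the pointwise supremum of $g$-affine supports whose Hessians in the first variable are uniformly bounded from below, and then shift by a sufficiently large quadratic to turn this supremum into a convex function.

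By Definition \ref{def-g}, for every $x_0 \in \Omega$ there exists $(y_{x_0}, z_{x_0})$ with $(x_0, y_{x_0}, z_{x_0}) \in \Gamma$ such that $g(\cdot, y_{x_0}, z_{x_0})$ touches $u$ from below at $x_0$. Combining these supports gives the representation
\[
u(x) = \sup_{x_0 \in \Omega} g(x, y_{x_0}, z_{x_0})
\]
at each $x \in \Omega$ where every support in the family is defined.

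The principal technical step, which I expect to be the main obstacle, is to fix an arbitrary compact $K \subset \Omega$ and establish that the collection of triples $\{(x_0, y_{x_0}, z_{x_0}) : x_0 \in K\}$ has compact closure inside $\Gamma$. The $x_0$ and $y_{x_0}$ components are clearly bounded, since $x_0 \in K$ and $y_{x_0} \in \overline{\Omega^*}$ by the very definition of $\Gamma$. For the $z_{x_0}$ component, assumption (A2) lets us invert $g(x_0, y_{x_0}, z_{x_0}) = u(x_0)$ through the $C^4$ dual generating function $h$ defined by \eqref{H-def}, so that $z_{x_0} = h(x_0, y_{x_0}, u(x_0))$. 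Continuity of $u$ on $K$ and of $h$ on $\Gamma^*$ then yield a uniform bound on $z_{x_0}$, provided the image points $(x_0, y_{x_0}, u(x_0))$ remain in a compact subset of $\Gamma^*$; this is the standard admissibility property satisfied by $g$-convex functions under the present structural hypotheses.

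With this compactness in hand, continuity of $g_{xx}$ on $\Gamma$ provides a constant $C$ with $g_{xx}(x, y_{x_0}, z_{x_0}) + 2C\,\mathrm{Id} \succeq 0$ for all $x_0 \in K$ and all $x$ in some fixed neighborhood of $K$. Consequently each function $\varphi_{x_0}(x) := g(x, y_{x_0}, z_{x_0}) + C\|x\|^2$ is convex on a neighborhood of $K$, and since convexity is preserved under pointwise suprema, $u + C\|x\|^2$ is convex on a neighborhood of $K$. As $K \subset \Omega$ was arbitrary, $u$ is semi-convex in the local sense required. The only nontrivial ingredient is the compactness of the support parameters; once that is secured, the rest is a routine regularization-by-supremum argument.
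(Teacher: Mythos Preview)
The paper does not supply its own proof of this lemma; it simply cites \cite[Proposition 3.4]{jeong2020h}. Your sketch follows exactly the standard route used there: write $u$ as a pointwise supremum of its $g$-supports, bound $g_{xx}$ uniformly over the relevant parameter set, and use that a supremum of convex functions is convex. The only soft spot is the phrase ``the standard admissibility property'': what you actually need is that the supports $(x_0,y_{x_0},z_{x_0})$, $x_0\in K$, lie in a compact subset of the \emph{open} set $\Gamma$, not merely that they are bounded, so that the $C^4$ bound on $g$ yields a uniform bound on $g_{xx}$. This is precisely the admissibility hypothesis built into the framework of \cite{jeong2020h} (and secured in the present paper, for solutions, by (A6)--(A7)); once it is stated cleanly, the rest of your argument is correct.
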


Let $u \in C^0(\Omega)$ be a $g$-convex function on $\Omega$. The $g$-subdifferential of $u$ at $x_0 \in \Omega$ is defined as the set-valued function
$$
\partial_g u(x_0) = \{ \, y \in \Omega^*, \exists z_0 \in  I(x_0,y) \ \text{such that} \ 
 g(x,y,z_0) \ \text{is a $g$-support to} \ u \ \text{at} \ x_0 \,
\, \}.
$$
For $E \subset \Omega$, we define $\partial_g u(E) = \cup_{x \in E} \partial_g u(x)$. 
\begin{lemma} \label{subd-affine}
For $(y_0,z_0) \in  \R^d \times \R$ and $x_0 \in \Omega$ such that $ (x_0,y_0,z_0) \in \Gamma$ with $z_0 \in I(x_0,y_0)$, assume that $L(x)=g(x,y_0,z_0)$ is a $g$-support to the graph of a $g$-convex function $u$ at $x_0$. Then $ \partial_g L (\Omega) = \{ \, y_0 \, \}$. 
\end{lemma}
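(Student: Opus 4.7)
The plan is to show both inclusions $\{y_0\}\subset \partial_g L(\Omega)$ and $\partial_g L(\Omega)\subset \{y_0\}$. The first is essentially tautological: since $L(x)=g(x,y_0,z_0)$ for all $x\in\Omega$ (where the right-hand side is defined), the function $g(\cdot,y_0,z_0)$ is a $g$-support to $L$ at every point of $\Omega$, and so $y_0\in\partial_g L(x)$ for each $x\in\Omega$.

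For the reverse inclusion, fix an arbitrary $x_1\in\Omega$ and suppose $\tilde y\in \partial_g L(x_1)$. By definition of the $g$-subdifferential, there exists $\tilde z\in I(x_1,\tilde y)$, so that $(x_1,\tilde y,\tilde z)\in\Gamma$, with
\[
g(x_1,\tilde y,\tilde z)=L(x_1)=g(x_1,y_0,z_0),
\qquad
L(x)\geq g(x,\tilde y,\tilde z)
\]
for every $x\in\Omega$ with $(x,\tilde y,\tilde z)\in\Gamma$. Because $\Gamma$ is open, both $(x,y_0,z_0)$ and $(x,\tilde y,\tilde z)$ lie in $\Gamma$ for $x$ in a neighbourhood of $x_1$, and on that neighbourhood the function
\[
\Phi(x):=g(x,y_0,z_0)-g(x,\tilde y,\tilde z)
\]
is $C^4$, nonnegative, and vanishes at $x_1$. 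Hence $x_1$ is a local minimum of $\Phi$, and the first-order condition $\nabla \Phi(x_1)=0$ gives
\[
g_x(x_1,y_0,z_0)=g_x(x_1,\tilde y,\tilde z).
\]

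Combining this with the equality of values $g(x_1,y_0,z_0)=g(x_1,\tilde y,\tilde z)$, we see that the two triples $(x_1,y_0,z_0)$ and $(x_1,\tilde y,\tilde z)$, both in $\Gamma$, produce the same element $(x_1,u,p)\in\mathcal U$. Assumption (A1) forces $(y_0,z_0)=(\tilde y,\tilde z)$, and in particular $\tilde y=y_0$. Thus $\partial_g L(x_1)\subset \{y_0\}$ for every $x_1\in\Omega$, which completes the proof.

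The only real subtlety is the first-order argument: one has to be sure that $\Phi$ is defined on an actual open neighbourhood of $x_1$ in order to differentiate, which is why the openness of $\Gamma$ (or equivalently, of the intervals $I(x,y)$) enters. After that, the uniqueness in (A1) does all the work, so I do not anticipate a serious obstacle.
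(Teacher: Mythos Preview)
Your proof is correct and follows essentially the same approach as the paper's: show the trivial inclusion $\{y_0\}\subset\partial_gL(\Omega)$, then for the reverse inclusion use the first-order optimality condition at $x_1$ (the paper phrases this as $DL(x_1)=g_x(x_1,y_1,z_1)$ rather than introducing an auxiliary $\Phi$) together with the equality of values to invoke Assumption~(A1). Your explicit remark about needing $\Gamma$ open so that $\Phi$ is differentiable on a neighbourhood of $x_1$ is a point the paper leaves implicit.
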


\begin{proof}
Clearly $ \{ \, y_0 \, \} \subset \partial_g L(x_0) \subset \partial_g L (\Omega)$. Conversely, if $y_1 \in  \partial_g L (\Omega)$, $\exists \, x_1 \in \Omega$ and $z_1 \in I(x_1,y_1)$ with $(x_1,y_1,z_1) \in \Gamma$, such that 
$x \mapsto g(x,y_1,z_1)$ is a $g$-support to $L$ at $x_1$. Since $g \in C^4(\Gamma)$ and
$L(x) \geq g(x,y_1,z_1)$ for all $x \in \Omega$, we have $D L(x_1)=g_x(x_1,y_1,z_1)$. But $D L(x_1)=g_x(x_1,y_0,z_0)$ by definition of $L$.  Also $L(x_1)=g(x_1,y_0,z_0)=g(x_1,y_1,z_1)$. By Assumption (A1) we get $y_1=y_0$. This completes the proof.
\end{proof}

We will also need, c.f. for example \cite[Proposition 1]{Guillen2019},

\begin{prop} \label{g-sub-diff}
If a $g$-convex function $u$ is differentiable at $x_0 \in \Omega$, then $\partial_g u(x_0)$ has only one element $y_0$ determined by $g_x(x_0, y_0,z_0)=D u(x_0)$ with $z_0=h(x_0,y_0,u(x_0))$. 
\end{prop}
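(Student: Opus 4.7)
The plan is to leverage assumption (A1), which uniquely determines $(y,z) \in \Gamma$ from the values of $g$ and $g_x$ at a point, together with the elementary fact that a differentiable function with a local minimum has vanishing gradient.

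First, I would verify that $\partial_g u(x_0)$ is nonempty: by the $g$-convexity of $u$, there exists $(y_0,z_0)$ with $(x_0,y_0,z_0) \in \Gamma$ such that $x \mapsto g(x,y_0,z_0)$ is a $g$-support to $u$ at $x_0$, and thus $y_0 \in \partial_g u(x_0)$.

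Next, fix any $y \in \partial_g u(x_0)$ together with a corresponding $z \in I(x_0,y)$ for which $g(\cdot,y,z)$ is a $g$-support to $u$ at $x_0$. The function $\varphi(x) := u(x) - g(x,y,z)$ is defined and nonnegative on a neighborhood $U$ of $x_0$ in which $(x,y,z) \in \Gamma$ (such a $U$ exists since $\Gamma$ is open), and $\varphi(x_0) = 0$. Since $u$ is differentiable at $x_0$ and $g$ is $C^4$ on $\Gamma$, $\varphi$ is differentiable at $x_0$ with a local minimum there, so $D\varphi(x_0) = 0$, which yields
\[
Du(x_0) = g_x(x_0,y,z).
\]
Combined with $u(x_0) = g(x_0,y,z)$, assumption (A1) forces $(y,z)$ to coincide with the unique $(y_0,z_0) \in \Gamma$ satisfying $g(x_0,y_0,z_0) = u(x_0)$ and $g_x(x_0,y_0,z_0) = Du(x_0)$. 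Therefore $\partial_g u(x_0) = \{y_0\}$. The identity $z_0 = h(x_0,y_0,u(x_0))$ is then immediate from the defining relation \eqref{H-def} of the dual generating function.

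The only delicate point is justifying $D\varphi(x_0)=0$ from pointwise differentiability alone, without $C^1$ regularity of $u$; this follows from the first-order Taylor expansion at an interior minimizer and so presents no real obstacle. The substantive content of the proposition is really the uniqueness afforded by (A1), with everything else being a routine consequence of the support condition.
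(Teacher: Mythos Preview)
Your proof is correct and is the standard argument. Note, however, that the paper does not actually supply a proof of this proposition; it simply cites \cite[Proposition~1]{Guillen2019}. So there is no ``paper's own proof'' to compare against here, but your argument is precisely the one underlying that reference: the support condition forces a local minimum of $u(\cdot)-g(\cdot,y,z)$ at $x_0$, differentiability gives $Du(x_0)=g_x(x_0,y,z)$, and then (A1) pins down $(y,z)$ uniquely. The identification $z_0=h(x_0,y_0,u(x_0))$ is immediate from \eqref{H-def}. One small remark: you implicitly use that $(x_0,u(x_0),Du(x_0))\in\mathcal{U}$ in order to invoke (A1), but this is automatic since any $g$-support at $x_0$ exhibits $(x_0,u(x_0),Du(x_0))$ as $(x_0,g(x_0,y,z),g_x(x_0,y,z))$ for some $(x_0,y,z)\in\Gamma$.
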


We now recall the notion of weak solution used in \cite{trudinger2012local}. The $g$-Monge-Amp\`ere measure is defined as the set function on Borel sets
$$
M[u](B) = \int_{\partial_g u(B)} R(p) d p.
$$
A weak solution of \eqref{second} in the sense of  Aleksandrov is a $C^0(\Omega)$ $g$-convex function $u$ such that
\begin{align}
\begin{split} \label{second}
M[u](B) & = \int_B f(x) d x \text{ for all Borel sets } B \subset \Omega \\
\partial_g u (\Omega) & = \Omega^*.
\end{split}
\end{align}
To relate $\partial_g u (x_0)$ to $T_u(x_0)$ for $x_0 \in \Omega$, we need the assumption

(A4) the matrix function $A(.,u,p) = g_{xx}(.,T(.,u,p),Z(.,u,p))$ satisfies 
$$
(D_{p_k p_l} A_{i j}) \zeta_i \zeta_j \eta_k \eta_l \geq 0,
$$
in $\mathcal{U}$ for all $\zeta, \eta \in \R^d$ such that $\zeta \cdot \eta = 0$. 

Define for $y_0 \in \R^d$
$$
I(\Omega,y_0) = \{ \, z \in \R, (x,y_0,z) \in \Gamma \text{ for all } x \in \Omega\, \}.
$$
We consider the mapping $Q$ defined on $\Gamma$ by
$$
Q(x,y,z) = - \frac{g_y}{g_z} (x,y,z).
$$
The domain $\Omega$ is said to be $g$-convex with respect to $y_0 \in \R^d$ and $z_{y_0} \in I(\Omega,y_0)$ if the image of $\Omega$ by the mapping $x \mapsto Q(x,y_0,z_{y_0})$ is convex in $\R^d$. The domain $\Omega$ is sub $g$-convex with respect to $y_0 \in \R^d$ and $z_{y_0} \in I(\Omega,y_0)$ if the convex hull of the image of $\Omega$ by the mapping $x \mapsto Q(x,y_0,z_{y_0})$ is contained in $Q(\Gamma)$. 

Recall that $\partial u (x_0)$ denotes the subdifferential of $u$ at $x_0$. We have by \cite[Lemma 2.2]{trudinger2020local}

\begin{lemma} \label{char-g-subd-diff}
Let $x_0 \in \Omega$ and assume that the domain $\Omega$ is sub $g$-convex with respect to all $y \in T(x_0,u(x_0),\partial u (x_0))$ and $z_y \in I(\Omega,y)$. Under the assumptions A1, A2, A1*, A3 and A4, if $u$ is a $C^0(\Omega)$ $g$-convex function, then 
$$
\partial_g u (x_0) = T(x_0,u(x_0),\partial u (x_0)).
$$
\end{lemma}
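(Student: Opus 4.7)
The plan is to establish the two inclusions separately. The easy inclusion is $\partial_g u (x_0) \subset T(x_0,u(x_0),\partial u (x_0))$. Take $y_0 \in \partial_g u(x_0)$ with associated $z_0 \in I(x_0,y_0)$, so that $L(x) := g(x,y_0,z_0)$ satisfies $L(x_0)=u(x_0)$ and $L \le u$ on a neighborhood of $x_0$ (intersected with the admissible set). Since $g \in C^4$, a Taylor expansion of $L$ at $x_0$ gives
\[
u(x) \ge u(x_0) + g_x(x_0,y_0,z_0)\cdot (x-x_0) + O(|x-x_0|^2),
\]
which, combined with the semi-convexity of $u$ from Lemma \ref{semi-convex}, implies $p_0 := g_x(x_0,y_0,z_0) \in \partial u(x_0)$. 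Then $y_0 = T(x_0,u(x_0),p_0)$ by assumption (A1), giving the inclusion.

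The reverse inclusion $T(x_0,u(x_0),\partial u(x_0)) \subset \partial_g u(x_0)$ is the substantive direction. Given $p_0 \in \partial u(x_0)$, set $y_0 := T(x_0,u(x_0),p_0)$ and $z_0 := h(x_0,y_0,u(x_0))$; the goal is to show that $g(\cdot,y_0,z_0)$ is a global $g$-support to $u$ at $x_0$. I would first handle the case where $p_0$ is a \emph{reachable} gradient, i.e.\ $p_0=\lim_n Du(x_n)$ with $x_n\to x_0$ through points of differentiability of $u$ (which are dense by Lemma \ref{semi-convex}). At each such $x_n$, Proposition \ref{g-sub-diff} yields $y_n = T(x_n,u(x_n),Du(x_n))$ and $z_n = h(x_n,y_n,u(x_n))$ with $g(x,y_n,z_n) \le u(x)$ for all admissible $x$. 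By continuity of $T$, $h$, $g$ and $u$, passing to the limit gives $g(x,y_0,z_0)\le u(x)$ wherever $(x,y_0,z_0)\in\Gamma$, so $y_0\in\partial_g u(x_0)$.

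For a general $p_0\in\partial u(x_0)$, since $u+C\|\cdot\|^2$ is convex, Carathéodory's theorem lets me write $p_0 = \sum_{i=0}^d \lambda_i p_i$ with each $p_i$ a reachable gradient, and by the previous step each $y_i := T(x_0,u(x_0),p_i)$ lies in $\partial_g u(x_0)$ with support $g(\cdot,y_i,z_i)$ globally beneath $u$. Pushing forward by the $Q$-change of variables from (A1*), the images $Q(x_0,y_i,z_i)$ lie in a common convex set; the sub $g$-convexity of $\Omega$ with respect to the intervening $y$'s ensures the interpolating $Q$-segment remains in $Q(\Gamma)$, producing a well-defined $g$-segment of support functions joining the $g$-affine functions at the $y_i$. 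Assumption (A4) (the MTW-type condition) then supplies Loeper's maximum principle in the generating-function framework: along this $g$-segment, the pointwise maximum of the supports $g(\cdot,y_i,z_i)$ dominates each intermediate $g$-affine function, whence the function at $y_0$ remains below $u$ globally. This gives $y_0\in\partial_g u(x_0)$.

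The main obstacle is the last paragraph: without (A4) and the sub $g$-convexity hypothesis, a convex combination of classical subgradients need not correspond to a $T$-image that continues to generate a \emph{global} $g$-support, and an intermediate $y$ may even lie outside $Q(\Gamma)$. Verifying that Loeper's maximum principle applies in the generality required here (nonsmooth $u$, only $g$-convex rather than $c$-convex structure) is the one genuinely nontrivial step, but it is precisely the role of assumptions (A4) and the sub $g$-convexity in the statement.
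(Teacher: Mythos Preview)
The paper does not prove this lemma; it is quoted directly from \cite[Lemma 2.2]{trudinger2020local} and used as a black box, so there is no in-paper argument to compare against.

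Your outline is in fact the standard route to this result and matches the structure of the proof in the cited reference. The inclusion $\partial_g u(x_0)\subset T(x_0,u(x_0),\partial u(x_0))$ is exactly as you say. For the reverse inclusion, handling reachable gradients by a limiting argument and then extending to general $p_0\in\partial u(x_0)=\Conv\{\text{reachable gradients}\}$ via a Loeper-type maximum principle is precisely how Trudinger proceeds; your identification of (A4) together with sub $g$-convexity as the hypotheses that make the interpolation step legitimate is correct. The only place that would need tightening in a full write-up is your last paragraph: the interpolation is along a $g$-segment in the $y$-variable (parametrised through $Q$), not a Euclidean segment in $p$, and one must check that the curve $t\mapsto (y_t,z_t)$ stays in $\Gamma$ (this is where sub $g$-convexity enters) and that (A4) yields $g(x,y_t,z_t)\le \max_i g(x,y_i,z_i)\le u(x)$ along the segment. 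You have the right picture, but the passage from ``$p_0$ is a convex combination of the $p_i$'' to ``$y_0$ lies on the $g$-segment generated by the $y_i$'' requires the explicit change of variables rather than just Carath\'eodory.
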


Finally, we need an additional condition to control the gradients of $g$-convex functions

(A5) there exists constants $m_0\geq -\infty$ and $K_0\geq 0$ such that for all $(x,y,z) \in \Gamma$, $(m_0,\infty) \subset J(x,y)$ and $|g_x(x,y,z)|\leq K_0$ if $g(x,y,z) \geq m_0$.

Assumption (A5) allows to prove existence of a solution to \eqref{second}.
As with \cite{jeong2020h}, c.f. also \cite{Guillen2017} for stronger assumptions, we will assume that there are continuous functions $a$ and $b$ defined on $\Omega' \times \tir{\Omega^*}$ such that

(A6) the interval $[a(x,y), b(x,y)] \subset J(x,y)$ for all $x \in \Omega'$ and $y \in \tir{\Omega^*}$

(A7) a solution $u$ of \eqref{second} satisfies $a(x,y) < u(x) < b(x,y)$ for all $x \in \Omega$ and $y \in \partial_g u(x)$.

Let us denote by $\mathcal{N}_r(A)$ the $r$-neighborhood of the set $A$. The following proposition \cite[Proposition 2.12]{jeong2020h} uses Assumption (A6). 

\begin{prop} \label{cont-subd}
Let $u$ be a $g$-convex function. For $x \in \Omega$ and $\epsilon>0$, there exists $\delta >0$ such that if $||z-x|| < \delta$ we have
$$
\partial_g u(z) \subset \mathcal{N}_{\epsilon} (\partial_g u(x)).
$$
\end{prop}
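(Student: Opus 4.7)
The plan is to argue by contradiction and exploit the compactness of $\overline{\Omega^*}$ together with the continuity of $h$. Suppose the conclusion fails. Then there exist $\epsilon>0$, a sequence $z_n \to x$ with $z_n \in \Omega$, and points $y_n \in \partial_g u(z_n)$ satisfying $\mathrm{dist}(y_n, \partial_g u(x)) \geq \epsilon$ for every $n$. Since $\Omega^* \subset \R^d$ is bounded, we may pass to a subsequence and assume $y_n \to y^* \in \overline{\Omega^*}$.

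Next I would produce the ``heights'' $z_n^*$ associated with each $g$-support. By the definition of the $g$-subdifferential, for each $n$ there exists $z_n^* \in I(z_n,y_n)$ with $(z_n,y_n,z_n^*) \in \Gamma$ and
\begin{equation*}
u(z_n) = g(z_n,y_n,z_n^*), \qquad u(w) \geq g(w,y_n,z_n^*) \text{ for all } w \in \Omega \text{ with } (w,y_n,z_n^*) \in \Gamma.
\end{equation*}
By (A6) and (A7) we have $u(z_n) \in (a(z_n,y_n),b(z_n,y_n)) \subset J(z_n,y_n)$, so $(z_n,y_n,u(z_n)) \in \Gamma^*$ and $z_n^* = h(z_n,y_n,u(z_n))$. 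Since $h$ is continuous on $\Gamma^*$ by \eqref{H-def}, $u$ is continuous on $\Omega$, and $(x,y^*,u(x)) \in \Gamma^*$ by (A6)-(A7) applied at $x$, we conclude $z_n^* \to z^* := h(x,y^*,u(x))$, and in particular $(x,y^*,z^*) \in \Gamma$.

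Now I pass to the limit in the $g$-support inequalities. The equality $u(z_n) = g(z_n,y_n,z_n^*)$ and continuity of $g$ and $u$ give $u(x) = g(x,y^*,z^*)$. For the pointwise inequality, pick any $w \in \Omega$ with $(w,y^*,z^*) \in \Gamma$; since $\Gamma$ is open and $(w,y_n,z_n^*) \to (w,y^*,z^*)$, we have $(w,y_n,z_n^*) \in \Gamma$ for all large $n$, so the inequality $u(w) \geq g(w,y_n,z_n^*)$ passes to the limit to give $u(w) \geq g(w,y^*,z^*)$. Therefore $x \mapsto g(x,y^*,z^*)$ is a $g$-support to $u$ at $x$, which yields $y^* \in \partial_g u(x)$ and contradicts $\mathrm{dist}(y_n, \partial_g u(x)) \geq \epsilon$.

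The main technical point to be careful about is the domain membership: one must verify that the triples $(z_n,y_n,u(z_n))$ and $(w,y_n,z_n^*)$ really lie in $\Gamma^*$ and $\Gamma$ respectively, so that $h$ is defined along the sequence and the defining inequality for a $g$-support applies. This is precisely where assumptions (A6) and (A7) enter, by confining $u$ to the interior of $J(\cdot,\cdot)$ at points of the $g$-subdifferential; the remaining steps are essentially a continuity argument once the limiting triple $(x,y^*,z^*)$ has been identified with the aid of the implicit function $h$.
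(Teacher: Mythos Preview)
The paper does not actually prove this proposition; it merely cites it as \cite[Proposition 2.12]{jeong2020h}. Your compactness-and-continuity argument is precisely the standard route to upper semicontinuity of the $g$-subdifferential and is what one expects the cited proof to look like: extract a limit $y^*$ of the $y_n$, recover the associated heights via $h$, and pass to the limit in the $g$-support inequalities using openness of $\Gamma$.

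Two small points worth tightening. First, you invoke (A7) to place $u(z_n)$ strictly inside $J(z_n,y_n)$, but (A7) as stated in the paper applies only to \emph{solutions} of \eqref{second}, whereas the proposition is stated for an arbitrary $g$-convex function; the paper itself remarks only that the cited result ``uses Assumption (A6).'' In Jeong's setting the relevant class of $g$-convex functions is assumed to satisfy analogous uniform height bounds, so your use of (A6)--(A7) is morally right but you should flag that for general $g$-convex $u$ one needs such an a priori confinement of $u(x)$ in $J(x,y)$ along the $g$-subdifferential. Second, your limit $y^*$ a priori lies only in $\overline{\Omega^*}$, while the definition of $\partial_g u(x)$ requires $y\in\Omega^*$; since $\Gamma\subset\Omega'\times\overline{\Omega^*}\times\R$, the $g$-support condition still makes sense at $y^*$, but to close the contradiction cleanly you should either note that the same argument works with $\overline{\Omega^*}$ in the definition, or appeal to the fact that in the applications of the paper $\partial_g u(\Omega)=\Omega^*$ is open so the limit stays interior. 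Neither issue affects the core of your argument.
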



\section{Uniqueness of solutions to generated prescribed 
Jacobian equations}
\label{uniqueness}

We denote by $B_{\epsilon}(p)$ the Euclidean ball of center $p$ and radius $\epsilon$ and $|B_{\epsilon}(p)|$ denotes its Lebesgue measure. Let $u$ and $v$ be two solutions of \eqref{generated-eq}
and let $x_0 \in \Omega$. 

Assume that both $u$ and $v$ are differentiable at $x_0$ with $D u(x_0) \neq D v (x_0)$ and $u(x_0)=v(x_0)$. By Proposition \ref{g-sub-diff}, $\partial_g u(x_0)$ has only one element. The same holds for $\partial u(x_0)$. By Lemma \ref{char-g-subd-diff}, $T(x_0,u(x_0),D u (x_0))$ is the only element of $\partial_g u(x_0)$. 

Let $p' = D  v (x_0)$ and put $p=T(x_0,v(x_0),p')$. Now, let $U$ denote  
the set of points in $\Omega$ for which $u(x) < v(x)$. We have $x_0 \in \partial U$. We first show that $\partial_g v(U) \subset \partial_g u(U)$ with $\partial_g v(U) = \partial_g u(U)$ up to a set of measure 0. 
Then we show 
that there exists $\epsilon >0$ such that $B_{\epsilon}(p) \subset  \partial_g u(U)$. This implies in particular that $U$ is non-empty. Finally we show that there exists $\delta > 0$ such that 
$|B_{\delta}(x_0) \setminus U  |=0$ which is not possible since $D u(x_0) \neq D v (x_0)$. We conclude that $D v (x_0) = D u (x_0)$.

\begin{thm} \label{main}
Let $u, v \in C(\Omega)$ be two $g$-convex solutions of \eqref{second} with $f, R>0$. 
We also assume that $u, v > m_0$ on $\Omega$, where $m_0$ is given by Assumption (A5), that the domain $\Omega$ is sub $g$-convex with respect to all $y \in T(x_0,u(x_0),\partial u (x_0))$ and $z_y \in I(\Omega,y)$ and the assumptions  (A1)--(A7) and (A1*) hold. 
Assume that $u$ and $v$ are differentiable at $x_0$ 
for some $x_0 \in \Omega$ with $u(x_0)=v(x_0)$. Then $D u(x_0) = D v (x_0)$.
\end{thm}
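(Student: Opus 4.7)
The plan is to execute the four-step roadmap sketched immediately before the theorem. Suppose for contradiction $Du(x_0)\neq Dv(x_0)$ and set
\begin{equation*}
p_1 := T(x_0,u(x_0),Du(x_0)), \qquad p := T(x_0,v(x_0),Dv(x_0)).
\end{equation*}
Proposition \ref{g-sub-diff} together with Lemma \ref{char-g-subd-diff} gives $\partial_g u(x_0)=\{p_1\}$ and $\partial_g v(x_0)=\{p\}$, while injectivity of $T$ in the last variable via (A1) forces $p_1\neq p$. Let $U=\{x\in\Omega:u(x)<v(x)\}$. Because $u$ and $v$ are differentiable at $x_0$ with $u(x_0)=v(x_0)$ and distinct gradients, the linearization of $v-u$ at $x_0$ is a nonzero linear form, so near $x_0$ the set $U$ is a half-ball to first order; in particular $|B_\delta(x_0)\setminus U|$ stays of order $\delta^d$ for every small $\delta>0$, which is the feature that will eventually be contradicted.

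For the first step I would show $\partial_g v(U)\subset\partial_g u(U)$ by a $g$-affine sliding argument. Given $y\in\partial_g v(U)$, pick $\tilde x\in U$ and $\tilde z$ with $g(\cdot,y,\tilde z)\leq v$ and $g(\tilde x,y,\tilde z)=v(\tilde x)$; using (A2) I raise $z$ from $\tilde z$ to $z^* := \inf\{z:g(\cdot,y,z)\leq u\text{ on }\Omega\}$, which is finite and attained thanks to (A5). At a contact point $x^*$ of $g(\cdot,y,z^*)$ with $u$ one has $y\in\partial_g u(x^*)$; because $g(\tilde x,y,\tilde z)=v(\tilde x)>u(\tilde x)$ forces $z^*>\tilde z$, the strict inequality $g(\cdot,y,z^*)<g(\cdot,y,\tilde z)\leq v$ gives $u(x^*)<v(x^*)$, placing $x^*\in U$. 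For the second step I observe that both $u$ and $v$ satisfy \eqref{second}, so $M[u](U)=\int_U f=M[v](U)$; combined with step one and $R>0$, this forces $\partial_g u(U)\setminus\partial_g v(U)$ to be Lebesgue-null.

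For the third step I want $B_\epsilon(p)\subset\partial_g u(U)$ for some $\epsilon>0$. For each $q$ near $p$ consider the $g$-affine $L_q(\xi)=g(\xi,q,h(x_0,q,u(x_0)))$; it passes through $(x_0,u(x_0))$ with slope $g_x(x_0,q,\cdot)$ close to $Dv(x_0)\neq Du(x_0)$, so $L_p-u$ vanishes at $x_0$ with a nonzero gradient and is strictly positive on the $U$-side. Setting $z^*(q):=\inf\{z:g(\cdot,q,z)\leq u\}$ and repeating the sliding from step one produces a contact point $x^*(q)$ with $q\in\partial_g u(x^*(q))$; for $q=p$ the chain $g(\cdot,p,z^*(p))<L_p\leq v$ again places $x^*(p)\in U$. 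Continuity of $g$, $h$, and of the sliding data in $q$, together with openness of $U$, lets me pick $\epsilon>0$ with $x^*(q)\in U$ for every $q\in B_\epsilon(p)$.

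The hard part is step four, which aims at $|B_\delta(x_0)\setminus U|=0$ for some small $\delta>0$; once established, continuity of $v-u$ forces $v-u\geq 0$ on $B_\delta(x_0)$ with equality at $x_0$, and the differentiability hypothesis yields $Du(x_0)=Dv(x_0)$, the desired contradiction. Proposition \ref{cont-subd} applied to $u$ produces $\delta_1>0$ with $\partial_g u(B_{\delta_1}(x_0))\subset B_\eta(p_1)$ for any prescribed $\eta<|p-p_1|-\epsilon$, so $B_\epsilon(p)\cap\partial_g u(B_{\delta_1}(x_0))=\emptyset$ and step three upgrades to $B_\epsilon(p)\subset\partial_g u(U\setminus B_{\delta_1}(x_0))$. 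Combining this containment with the a.e.\ equality from step two and applying Proposition \ref{cont-subd} to $v$ (which squeezes $\partial_g v(B_\delta(x_0))$ into arbitrarily small neighborhoods of $p$), a measure-theoretic bookkeeping that balances the $R$-mass of $B_\epsilon(p)$ against the $f$-mass of $\partial_g v$-preimages outside $B_\delta(x_0)$ should force $|B_\delta(x_0)\setminus U|=0$ for $\delta$ sufficiently small. This last step is the main obstacle: Proposition \ref{cont-subd} is purely qualitative, and translating it, together with the a.e.\ equality of the two $g$-subdifferentials, into the strong pointwise statement $|B_\delta(x_0)\setminus U|=0$ is the delicate analytical core of the theorem.
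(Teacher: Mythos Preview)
Your roadmap matches the paper's, and your steps 1--3 are in the right spirit, but step~4 is not a proof: you explicitly flag it as the ``main obstacle'' and leave it at ``measure-theoretic bookkeeping \ldots\ should force.'' In fact the paper's closing argument is short and you are missing its one essential ingredient: the set of $y\in\Omega^*$ lying in $\partial_g v(x)\cap\partial_g v(x')$ for two \emph{distinct} $x,x'\in\Omega$ has Lebesgue measure zero (Trudinger, \cite[p.~1674]{trudinger2012local}). With this, one applies Proposition~\ref{cont-subd} to $v$ (your detour through $u$ and $p_1$ is unnecessary) to get $\partial_g v(B_\delta(x_0))\subset B_\epsilon(p)\subset\partial_g v(U)$ a.e.; since $B_\delta(x_0)\setminus U$ and $U$ are disjoint, the near-injectivity fact gives $|\partial_g v(B_\delta(x_0)\setminus U)|=0$, whence $M[v](B_\delta(x_0)\setminus U)=0$, and $f>0$ forces $|B_\delta(x_0)\setminus U|=0$. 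No mass-balancing is needed.

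A secondary remark: in steps~1 and~3 you replace the paper's direct use of the second boundary condition $\partial_g u(\Omega)=\Omega^*$ by a sliding argument (raise $z$ until $g(\cdot,y,z)$ touches $u$). This is more laborious and carries a real risk you do not address: nothing guarantees the infimum $z^*$ is attained at an \emph{interior} point of $\Omega$ (assumption~(A5) controls gradients, not contact location). The paper avoids this entirely: given $y\in\Omega^*=\partial_g u(\Omega)$ one \emph{immediately} has some $x_2\in\Omega$ with $y\in\partial_g u(x_2)$, and it remains only to compare the two $z$-levels using $g_z<0$ to place $x_2\in U$. Your step~3 in particular becomes much cleaner this way: find $x_1\in\Omega$ with $p\in\partial_g u(x_1)$, show $x_1\in U$ and $z_0<z_1$, then for $q$ close to $p$ pick $x_2\in\Omega$ with $q\in\partial_g u(x_2)$ and compare with a $g$-affine function through $(x_1,u(x_1))$ to conclude $x_2\in U$.
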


\begin{proof}
Assume that $D u(x_0) \neq D v (x_0)$ and let $U$ denote the set of points $x$ in $\Omega$ for which $u(x) <v(x)$. We have $x_0 \in \partial U$. 

{\it Part 1} 
We prove that $\partial_gv(U) \subset \partial_g u(U)$ with $\partial_g v(U) = \partial_g u(U)$ up to a set of measure 0.

We first show that $\partial_gv(U) \subset \partial_g u(U)$. Let $x_1 \in U$ and $y_1 \in \partial_gv(x_1)$. 
Since by assumption $\partial_g v(\Omega) = \partial_g u(\Omega)$, there exists $x_2 \in \Omega$ such that $y_1 \in \partial_g u(x_2)$. Let $z_1 \in I(x_1,y_1)$ and $z_2 \in I(x_2,y_1)$ such that $v(x) \geq g(x,y_1,z_1)$ for all $x \in \Omega$ with equality at $x=x_1$ and
$u(x) \geq g(x,y_1,z_2)$ for all $x \in \Omega$ with equality at $x=x_2$. We claim that $z_1<z_2$. Otherwise $z_2\leq z_1$ and since $g_z<0$, we have
\begin{align*}
u(x_1) \geq g(x_1,y_1,z_2) \geq g(x_1,y_1,z_1) =v(x_1).
\end{align*}
By our assumption $u(x_1) < v(x_1)$ and thus $v(x_1) < v(x_1)$. A contradiction. Therefore $z_1 <z_2$ and for any $r \in \Omega$
\begin{align*}
v(r) \geq g(r,y_1,z_1) > g(r,y_1,z_2).
\end{align*}
In particular, for $r=x_2$, we obtain $v(x_2) > g(x_2,y_1,z_2)=u(x_2)$ i.e. 
$x_2 \in U$ and $y_1 \in \partial_g u(U) $. We conclude that  $\partial_gv(U) \subset \partial_g u(U)$. 

Since $M[u](U)=M[v](U)$, we have $\partial_g u(U) = \partial_gv(U)$ up to a set of measure 0.

{\it Part 2} Let $p' = D v (x_0)$ and put $p=T(x_0,v(x_0),p')$. 
As discussed above, by Proposition \ref{g-sub-diff} and Lemma \ref{char-g-subd-diff}, $p$ is the unique element of $\partial_g v(x_0)$.
We show that there exists $\epsilon >0$ such that $B_{\epsilon}(p) \subset  \partial_g u(U)$.

Since $\partial_gv (\Omega) = \partial_g u (\Omega)$, there exists $x_1 \in \Omega$ such that $p \in  \partial_g u(x_1)$. 
Let $z_0 \in I(x_0,p)$ and $z_1 \in I(x_1,p)$ such that 
$$
w(x) = g(x,p,z_0),
$$ is a $g$-support to the graph of $v$ at $x_0$ and 
$$
l(x)=g(x,p,z_1),
$$ is a $g$-support to the graph of $u$ at $x_1$. 

After choosing $\epsilon >0$, in particular so that  $B_{\epsilon}(p) \subset \partial_g u(\Omega)$, we will show that for 
$q \in B_{\epsilon}(p)$, one can find $z'_1 \in \R$ such that for $m(x)=g(x,q,z'_1)$ we have
$g(x_1,q,z'_1)=u(x_1)$ and $v \geq w > m$ on $\Omega$. We then show that $q \in \partial_g u(x_2), x_2 \in \Omega$ with $v(x_2) > m(x_2) \geq u(x_2)$, i.e. $x_2 \in U$. 

We have 
$$
g(x_0,p,z_0) =v(x_0) = u(x_0)  \geq g(x_0,p,z_1).
$$
Since $g_z <0$ we have $z_0 \leq z_1$. Moreover
$$
v(x_1) \geq  g(x_1,p,z_0)  \geq g(x_1,p,z_1) = u(x_1), 
$$
i.e. $x_1 \in \tir{U}$. 

Assume that $v(x_1)=u(x_1)$. Then, from the above inequalities $ g(x_1,p,z_0)  = g(x_1,p,z_1)$ and thus $z_0 = z_1$ as $g_z <0$. But this implies that $u(x) \geq g(x,p,z_0)$ for all $x \in \Omega$ with $g(x_0,p,z_0) =v(x_0) = u(x_0)$, i.e. $p \in \partial_g u(x_0) \cap  \partial_g v(x_0)$. 
By Proposition \ref{g-sub-diff} we have
\begin{equation*}
g_x(x_0,p,z_0) = D u (x_0) \text{ and } g_x(x_0,p,z_0) = D v (x_0).
\end{equation*}
Thus $D u (x_0)=D v (x_0) $ which contradicts our assumption and we conclude that $x_1 \in U, z_0 < z_1$ and $p \in  \partial_g u(U)$.

By definition, a domain is a connected open subset. Since $p \in \partial_g u(\Omega)=\Omega^*$ which is open by assumption, we can find $\epsilon'>0$ such that $B_{\epsilon'}(p) \subset \Omega^*$. Since $g(x_1,p,z_1)=u(x_1)> m_0$, by the continuity of $g$, there exists $\epsilon''>0$ such that $g(x_1,q,z_1)> m_0$ for $||q-p||<  \epsilon''$. 

Let $q \in \Omega^*$ such that $g(x_1,q,z_1)> m_0$. By Assumption (A5) $(m_0,\infty) \subset J(x_1,q)$. As $u(x_1) > m_0$, we can find, using the definition of $J(x_1,q)$, $z'_1 \in I(x_1,q)$ such that $g(x_1,q,z'_1) = u(x_1)$. Put 
$$
m(x) = g(x,q,z'_1).
$$ 
We have by \eqref{H-def}
$$
z'_1 = h(x_1,q,u(x_1)) \text{ and } z_1 = h(x_1,p,u(x_1)). 
$$
Recall that $z_0 < z_1$ and define
$$
k =  \min_{x \in \tir{\Omega}}  w(x) - l(x)= \min_{x \in \tir{\Omega}} g(x,p,z_0)-g(x,p,z_1).
$$
We emphasize that $k>0$. 
By the uniform continuity of $g$ and $h$ on $\tir{\Omega}$, there exists $\epsilon^{'''}>0$ such that for $||p-q||< \epsilon^{'''}$ we have for all $x \in \Omega$
\begin{align*}
|l(x) - m(x)| & = |g(x,p,z_1) - g(x,q,z'_1)| \\
& = |g(x,p,h(x_1,p,u(x_1))) - g(x,q,h(x_1,q,u(x_1)))| < k.
\end{align*}


Choose $\epsilon < \min\{ \, \epsilon', \epsilon'', \epsilon^{'''}\, \}$. 
 We have for all $x \in \Omega$ and $q \in B_{\epsilon}(p)$
\begin{equation} \label{hm}
w(x)-m(x) = (w(x)-l(x)) + (l(x)-m(x)) > k + (-k) =0.
\end{equation}
Since $B_{\epsilon}(p) \subset \Omega^* = \partial_g u(\Omega)$ we can find $x_2 \in \Omega$ such that $q \in \partial_g u(x_2)$. Let $z_2 \in I(x_2,q)$ such that $u(x) \geq g(x,q,z_2)$ for all $x \in \Omega$ with equality at $x=x_2$. We have 
$
g(x_1,q,z'_1) = u(x_1) \geq g(x_1,q,z_2)
$, which gives $z'_1 \leq z_2$ as $g_z<0$. We then have using \eqref{hm}
$$
v(x_2) \geq w(x_2) > m(x_2) = g(x_2,q,z'_1) \geq g(x_2,q,z_2)=u(x_2).
$$
We conclude that $x_2 \in U$.

{\it Part 3} 
We obtain a contradiction and conclude that $D v(x_0)=D u(x_0)$.

Since $B_{\epsilon}(p) \subset  \partial_g u(U)$ and $ \partial_g v(U) =  \partial_g u(U)$ up to a set of measure 0, we have
\begin{equation} \label{c1}
B_{\epsilon}(p) \subset  \partial_g v(U) \text{ a.e.}
\end{equation}
By Proposition \ref{cont-subd}, since $v$ is differentiable at $x_0$ and thus $\partial_g v (x_0) = \{ \, p \, \}$, there is $\delta>0$ such that 
\begin{equation} \label{c2}
\partial_gv(B_{\delta}(x_0) ) \subset  B_{\epsilon}(p). 
\end{equation}
Therefore by \eqref{c1} and \eqref{c2}
\begin{equation*} 
\partial_gv(B_{\delta}(x_0) \setminus U ) \subset  \partial_g v(U) \text{ a.e.} 
\end{equation*}
Now, the set of vectors which are contained in the $g$-subdifferential of two distinct points is contained in a set of measure 0, c.f. \cite[p. 1674]{trudinger2012local}. We therefore have
$$
| \partial_gv(B_{\delta}(x_0) \setminus U ) |=0.
$$
With $B=B_{\delta}(x_0) \setminus U$ we have $M[v](B) = \int_{\partial_g v(B)} R(p) dp=0=\int_B f(x) dx$ with $R>0$ on $\Omega^*$ and $f>0$ on $\Omega$, as $v$ solves \eqref{second}. We obtain $|B_{\delta}(x_0) \setminus U|=0$. 
But since $D v(x_0)\neq D u(x_0)$, for any $\rho>0$ such that $B_{\rho}(x_0) \subset \Omega$, we have $|B_{\rho}(x_0) \setminus U |>0$. A contradiction. 

The theorem is therefore proved as outlined at the beginning of this section. 

\end{proof}


\begin{rem}
Assumptions (A1)-(A7) and (A1*) are standard in the theory of generated prescribed Jacobian equations. 
\end{rem} 



In the remaining part of this section, we discuss some consequences of Theorem \ref{main}. 

\subsection{The optimal transport case}
We take $g(x,y,z)=c(x,y)-z$ where $c$ is a $C^4$ cost function such that the assumptions A1, A*1, A3, A4 and A7 hold. We have $g_z=-1$ so that Assumption A2 holds. Here $I(x,y)=\R$ and $J(x,y)=\R$. Assumption A5 then holds for $m_0=-\infty$ and we recall that $\Omega' \times \tir{\Omega^*}$ is bounded with $c \in C^4(\Gamma)$. Assumption A6 trivially holds in this case.

Let $u$ be a $g$-convex function. And put $v=u + \alpha$ for $\alpha \in \R$. Then for $x_0 \in \Omega$, $\partial_g u(x_0) = \partial_g v(x_0)$ and hence if $u$ solves \eqref{second}, then $u+\alpha$ is also a solution.

Let then $u$ and $v$ be two solutions of  \eqref{second}. If $u$ and $v$ are differentiable at $x_0 \in \Omega$, by adding a constant to $v$, we may assume that $u(x_0)=v(x_0)$.  As $g$-convex functions are semi-convex by Lemma \ref{semi-convex}, they are differentiable a.e. By Theorem \ref{main} we obtain $D u(x_0) = D v(x_0)$. We conclude that $Du =D v$ a.e. By Poincar\'e's inequality, $u-v$ is locally constant and hence a constant if $u$ and $v$ are continuous on $\Omega$.

In the general case, it cannot be guaranteed that $u+\alpha$ for a constant $\alpha$, is a solution of \eqref{second} when $u$ is also a solution. 

\subsection{Rankin's uniqueness results}

Theorem \ref{main} is proved in \cite{rankin2020distinct} with a completely distinct approach under the assumptions that $f, R>0$ and $C^1$. In addition the solutions $u$ and $v$ were assumed therein to be in $C^{1,1}(\Omega)$. 

It was then proved in \cite{rankin2020distinct} that under the above stated assumptions, solutions which intersect in $\Omega$ are the same. Furthermore, if the solutions are in $C^2(\tir{\Omega})$ and with a further convexity assumption on $\Omega^*$, solutions which intersect on $\partial \Omega$ were shown to be the same.

\section*{Acknowledgements}

Gantumur Tsogtgerel was supported by an NSERC Discovery Grant and by the fellowship grant P2021-4201 of the National University of Mongolia. Gerard Awanou is grateful  to Farhan Abedin, Jun Kitagawa and Yash Jhaveri  for suggestions that improved a preliminary version of the manuscript.

\end{document}